\numberwithin{equation}{section}
\newcommand{\CircNum}[1]{\ooalign{\hfil\raise .00ex\hbox{\scriptsize #1}\hfil\crcr\mathhexbox20D}}
\newcommand{\bF}{\mathbb{F}}
\newcommand{\bN}{\mathbb{N}}
\newcommand{\bQ}{\mathbb{Q}}
\newcommand{\bR}{\mathbb{R}}
\newcommand{\bZ}{\mathbb{Z}}
\newcommand\lra{\longrightarrow}
\newcommand\Diff{\mathrm{Diff}}
\newcommand\colim{\operatorname*{colim}}
\newcommand{\hcoker}{/\!\!/}
\newcommand{\Fr}{\mathrm{Fr}}
\newcommand{\R}{\bR}
\newcommand{\hAut}{\mathrm{hAut}}
\newcommand{\moddd}{/\!\!/}
\renewcommand{\epsilon}{\varepsilon}
\newcommand{\Gr}{\mathrm{Gr}}
\newcommand{\cM}{\mathcal{M}}
\newcommand{\GL}{\mathrm{GL}}
\newcommand{\Coeffs}{A} 
\mathchardef\ordinarycolon\mathcode`\:
\theoremstyle{plain}
\newtheorem{theorem}{Theorem}[section]
\newtheorem{lemma}[theorem]{Lemma}
\theoremstyle{definition}
\theoremstyle{remark}
\newtheorem{remark}[theorem]{Remark}
\newtheorem*{remark*}{Remark}
\title{Operations on stable moduli spaces}
\author{S{\o}ren Galatius}
 \email{galatius@math.ku.dk}
 \address{Department of Mathematics\\
   University of Copenhagen\\
   Denmark}
\author{Oscar Randal-Williams}
\email{o.randal-williams@dpmms.cam.ac.uk}
\address{Centre for Mathematical Sciences\\
Wilberforce Road\\
Cambridge CB3 0WB\\
UK}
\subjclass[2010]{55P47, 55R40, 57S05, 57R15, 57R90}
\begin{document}
\begin{abstract}
We construct certain operations on stable moduli spaces and use them to compare cohomology of  moduli spaces of closed manifolds with tangential structure.  We obtain isomorphisms in a stable range provided the $p$-adic valuation of the Euler characteristics agree, for all primes $p$ not invertible in the coefficients for cohomology.
\end{abstract}
\maketitle

\section{Introduction}

An influential theorem of Harer \cite{H} shows that the cohomology of the moduli stack $\mathcal{M}_g$ of genus $g$ Riemann surfaces is independent of $g$ in a range of degrees called the stable range, even though there is no direct map between the moduli spaces for different genera.  With rational coefficients the cohomology in the stable range is a polynomial ring, but with more general coefficients it is best described via \emph{infinite loop spaces}, as shown by \cite{Tillmann,MT,MW}.  In earlier papers (\cite{GR-W2,GR-W3,GR-W4}, see also \cite{GRWguide} for a survey) we have studied moduli spaces of higher dimensional manifolds, and in some cases have again shown that different moduli spaces have isomorphic cohomology in a range of degrees.  For $n > 1$ one can in most cases not make an integral comparison of moduli spaces of manifolds related by connected sum with copies of $S^n \times S^n$, at least not by an obvious generalization of the $n=1$ case, where a zig-zag of integral homology equivalences can be defined using manifolds with boundary.  In this paper we show that a comparison is possible after all, although not with all coefficient modules.  We also give examples showing that assumptions on the coefficients are necessary.

\subsection{Comparing moduli spaces of closed manifolds}

All manifolds in this paper will be smooth, compact, connected, and without boundary. If $W$ denotes such a manifold then there is a \emph{moduli space} $\cM(W)$ classifying smooth fibre bundles whose fibres are diffeomorphic to $W$.  As a model we may take $\cM(W) = B\Diff(W)$, the classifying space of the diffeomorphism group $\Diff(W)$ of $W$, equipped with the $C^\infty$ topology. Then for $A$ an abelian group $H^i(\cM(W);\Coeffs)$ is the group of $H^i(-;\Coeffs)$-valued characteristic classes of such fibre bundles.

Now let $d = 2n$ and $W$ be a $d$-manifold.  The connected sum $W \# (S^n \times S^n)$ is then well defined up to (non-canonical) diffeomorphism, as $S^n \times S^n$ admits an orientation-reversing diffeomorphism, and we write $W \# g(S^n \times S^n)$ for the $g$-fold iteration of this operation.  Two manifolds $W$ and $W'$ are called \emph{stably diffeomorphic} if $W \# g(S^n \times S^n)$ is diffeomorphic to $W' \# g'(S^n \times S^n)$ for some $g,g' \in \bN$.  For example, any two orientable connected surfaces are stably diffeomorphic, while two non-orientable connected surfaces are stably diffeomorphic if and only if their Euler characteristic have the same parity.

In this paper we shall ask about the relationship between $H^*(\cM(W);\Coeffs)$ and $H^*(\cM(W');\Coeffs)$ when $W$ and $W'$ are stably diffeomorphic. As a special case our main result will provide a canonical isomorphism 
$$H^i(\cM(W);\bZ_{(p)}) \cong H^i(\cM(W');\bZ_{(p)})$$
as long as these manifolds are simply-connected and of dimension $2n > 4$, and both $(-1)^n \chi(W)$ and $(-1)^n \chi(W')$ are large compared with $i$ and have the same $p$-adic valuation.

The precise statement of our main result applies more generally, and before giving it we first explain its natural setting. If $W$ is given an orientation $\lambda$ then there is a corresponding moduli space $\cM^\mathrm{or}(W,\lambda)$ classifying smooth fibre bundles with oriented fibres which are oriented diffeomorphic to $(W,\lambda)$, and a forgetful map $\cM^\mathrm{or}(W,\lambda) \to \cM(W)$. Then the connected sum $W \# g(S^n \times S^n)$ inherits an orientation, well defined up to oriented diffeomorphism, and we say that $(W,\lambda)$ is \emph{oriented stably diffeomorphic} to $(W',\lambda')$ provided $W \# g(S^n \times S^n)$ is oriented diffeomorphic to $W' \# g' (S^n \times S^n)$ for some $g,g' \in \bN$.  In this situation our result will also imply a canonical isomorphism $H^i(\cM^\mathrm{or}(W,\lambda);\bZ_{(p)}) \cong H^i(\cM^\mathrm{or}(W',\lambda');\bZ_{(p)})$, under the same hypotheses.

More generally, for a space $\Lambda$ equipped with a continuous action of $\GL_{d+1}(\R)$ a $\Lambda$-structure on a $d$-manifold $W$ is a $\GL_d(\R)$-equivariant map $\lambda: \Fr(TW) \to \Lambda$, or, equivalently, a $\GL_{d+1}(\R)$-equivariant map $\Fr(\epsilon^1 \oplus TW) \to \Lambda$.  For example, if $\Lambda = \{\pm 1\}$ on which $\GL_{d+1}(\R)$ acts by multiplication by the sign of the determinant, then a $\Lambda$-structure $\lambda: \Fr(TW) \to \{\pm 1\}$ is the same thing as an orientation: it distinguishes oriented frames from non-oriented ones.  Two $\Lambda$-structures on the same manifold are homotopic if they are homotopic through equivariant maps, and $(W,\lambda)$ is $\Lambda$-diffeomorphic to $(W',\lambda')$ if there exists a diffeomorphism $\phi: W \to W'$ such that $\lambda \circ D\phi$ is homotopic to $\lambda'$.  The usual embedding of $S^n \times S^n \subset \R^{2n+1}$ as the boundary of a thickened $S^n \times \{0\} \subset \R^{n+1} \times \R^n$ gives a trivialisation of $\epsilon^1 \oplus T(S^n \times S^n)$ and a $\Lambda$-structure on $W$ extends to one on $W \# (S^n \times S^n)$, canonically up to $\Lambda$-diffeomorphism.  For two pairs $(W,\lambda)$ and $(W',\lambda')$ consisting of a manifold and a $\Lambda$-structure, we say that they are \emph{stably $\Lambda$-diffeomorphic} if $W \# g(S^n \times S^n)$ is $\Lambda$-diffeomorphic to $W' \# g'(S^n \times S^n)$ for some $g,g' \in \bN$.

There is a moduli space $\cM^\Lambda(W,\lambda)$ parametrising smooth fibre bundles $\pi: E \to X$ with $d$-dimensional fibres, and where the fibrewise tangent bundle $T_\pi E$ is equipped with an equivariant map $\Fr(\epsilon^1 \oplus T_\pi E) \to \Lambda$, such that all fibres of $\pi$ are $\Lambda$-diffeomorphic to $(W,\lambda)$. Our main result is then as follows.

\begin{theorem}\label{mainthm:stability}
  Let $\Lambda$ be as above, and let $\lambda$ and $\lambda'$ be $\Lambda$-structures on $W$ and $W'$ such that $(W,\lambda)$ is stably $\Lambda$-diffeomorphic to $(W',\lambda')$.  For an abelian group $\Coeffs$ there is a canonical isomorphism
  \begin{equation*}
    H^i(\cM^\Lambda(W,\lambda);\Coeffs) \cong H^i(\cM^\Lambda(W',\lambda');\Coeffs),
  \end{equation*}
induced by a zig-zag of maps of spaces, provided
  \begin{enumerate}[(i)]
  \item $d = 2n > 4$ and $W$ and $W'$ are simply connected,
	
  \item the integers $(-1)^n \chi(W)$ and $(-1)^n\chi(W')$ are both $\geq 4i + C$, where
    \begin{equation*}
      C = 6 + \min \{(-1)^n \chi(W_0) \mid \text{$(W_0,\lambda_0)$ stably $\Lambda$-diffeomorphic to $(W,\lambda)$ and $(W',\lambda')$}\}.
    \end{equation*}

  \item $\chi(W)$ and $\chi(W')$ are both non-zero, and $v_p(\chi(W))=v_p(\chi(W'))$ for all primes $p$ which are not invertible in $\mathrm{End}_\bZ(\Coeffs)$.
  \end{enumerate}
\end{theorem}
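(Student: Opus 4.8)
The plan is to reduce the comparison of $H^*(\cM^\Lambda(W,\lambda))$ and $H^*(\cM^\Lambda(W',\lambda'))$ to a statement about the stable homology of these moduli spaces, and then to build operations which intertwine the relevant stabilised objects. First I would invoke the homological stability theorem and the identification of the stable homology (the Galatius--Randal-Williams generalisation of the Madsen--Weiss theorem): after forming the limit $\cM^\Lambda_\infty$ obtained by iterated connected sum with $S^n \times S^n$, there is a homology equivalence $\cM^\Lambda_\infty(W,\lambda) \to \Omega^\infty_\bullet \mathrm{MT}\theta$ onto a path component of an infinite loop space depending only on the tangential structure $\theta$ (the map $\Lambda \to BGL_{d+1}(\R)$), and \emph{not} on $W$ itself; and by the stability range, in degrees $i$ satisfying the hypothesis (ii), $H^i(\cM^\Lambda(W,\lambda);\Coeffs) \cong H^i(\cM^\Lambda_\infty(W,\lambda);\Coeffs)$. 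Since $(W,\lambda)$ and $(W',\lambda')$ are stably $\Lambda$-diffeomorphic, they have the \emph{same} $\theta$ and hence the \emph{same} stable moduli space; the entire content is therefore to compare the two \emph{path components} of $\Omega^\infty \mathrm{MT}\theta$ picked out by $W$ and $W'$ — equivalently, to compare $\Omega^\infty_0 \mathrm{MT}\theta$ with itself but shifted by the difference of the two ``genus-type'' invariants, which is governed by the Euler characteristics.

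The key step is to construct the titular \emph{operations on stable moduli spaces}: self-maps of (a path component of) $\Omega^\infty \mathrm{MT}\theta$ that realise multiplication on the set of path components. Concretely, $\pi_0 \Omega^\infty \mathrm{MT}\theta$ is a group, and the component hit by $\cM^\Lambda(W,\lambda)$ is determined by a class whose ``size'' is measured by $\chi(W)$; I would produce, for each prime $p$ not invertible in $\End_\bZ(\Coeffs)$, a map which is a homology isomorphism with $\Coeffs$-coefficients and which multiplies this invariant by a prescribed $p$-local unit. The natural source of such operations is a transfer or power-operation construction: taking an unramified $k$-fold cover, or more precisely using the $E_\infty$-structure on $\Omega^\infty\mathrm{MT}\theta$ together with the fact that under $p$-localisation, multiplication by an integer prime to $p$ is invertible, so one can build a map between components whose Euler-characteristic labels differ by a factor that is a unit in $\Coeffs$ at every relevant prime. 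Using hypothesis (iii) — that $\chi(W)$ and $\chi(W')$ are nonzero with equal $p$-adic valuation at every prime not invertible in $\End_\bZ(\Coeffs)$ — the ratio $\chi(W')/\chi(W)$ is, after clearing the harmless primes, a product of units in $\Coeffs$, so a composite of such operations carries the $W$-component to the $W'$-component by a $\Coeffs$-homology equivalence.

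Assembling the zig-zag: $\cM^\Lambda(W,\lambda) \to \cM^\Lambda_\infty(W,\lambda) \simeq_{\Coeffs} (\text{$W$-component}) \xrightarrow{\ \text{operation}\ } (\text{$W'$-component}) \simeq_{\Coeffs} \cM^\Lambda_\infty(W',\lambda') \leftarrow \cM^\Lambda(W',\lambda')$, where the first and last maps are $H^i(-;\Coeffs)$-isomorphisms by stability in the range (ii), the middle equivalences are the parametrised Pontryagin--Thom / Madsen--Weiss identification, and the operation is a $\Coeffs$-homology equivalence by the construction above. Hypothesis (i) ($d = 2n > 4$, simply connected) is exactly what is needed for the homological stability theorem and the stable homology identification to apply, and the explicit constant $C$ in (ii) comes from tracking the stability range through the minimal-genus representative $W_0$ in the stable $\Lambda$-diffeomorphism class. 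The main obstacle I anticipate is the construction and analysis of the operations: one must show they are defined at the chromatic level needed, that they genuinely realise the arithmetic of Euler characteristics on $\pi_0$, and that they induce $\Coeffs$-homology isomorphisms rather than merely homology isomorphisms after inverting certain primes — this is where the precise interplay between $v_p(\chi(W))$, the primes invertible in $\End_\bZ(\Coeffs)$, and the self-maps of the infinite loop space has to be pinned down, and it is the technical heart of the paper.
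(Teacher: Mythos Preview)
Your overall architecture matches the paper's: reduce to comparing path components of the relevant infinite loop space via the stability range and the parametrised Pontryagin--Thom map, then build operations that move between components by scaling the Euler-characteristic label. But there are two substantive gaps.

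First, the target of the comparison map from $\cM^\Lambda(W,\lambda)$ is not $\Omega^\infty MT\Theta$ but the homotopy orbit space $(\Omega^\infty MT\Theta)\moddd \hAut(u)$, where $\Theta$ is the Moore--Postnikov $n$-stage of $\lambda$ and $\hAut(u)$ is the monoid of self-equivalences of $\Theta$ over $\Lambda$. (This is what makes the target depend only on $\Lambda$ and the stable $\Lambda$-diffeomorphism class.) Consequently any operation you construct on $\Omega^\infty MT\Theta$ must be equivariant for this action, strictly enough to descend to the homotopy orbit space. This is not automatic: the paper goes to some length to build a point-set model in which the operations arise from fibrewise constructions over $\Gr_{2n+1}(\bR^N)$, so that $\hAut(\overline{\Theta})$ acts compatibly on the entire diagram, and then proves a lemma that $\hAut(\overline{u}) \to \hAut(u)$ is a weak equivalence. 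The $E_\infty$ structure by itself does not give this equivariance, and your plan does not mention it.

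Second, ``transfer or power-operation'' is too vague, and the structure of $\pi_0$ is more delicate than you indicate. The paper's mechanism is specific: a homotopy pullback square exhibiting $\Omega^\infty MT\Theta$ as the pullback of $Q(\overline{B}_+) \to \Omega^\infty C_{st} \leftarrow \Omega^{\infty-1} MT\overline{\Theta}$, where $C_{st}$ is the cofibre of the Becker--Gottlieb transfer composite $st$ for the sphere bundle $B \to \overline{B}$ and has $2$-power torsion homotopy. The operation $\psi^q$ is then induced by the self-map $1 + k\,st$ of $Q(\overline{B}_+)$ for $q = 1+2k$, which multiplies homology by $q$ and is over the identity of both $C_{st}$ and $\Omega^{\infty-1}MT\overline{\Theta}$. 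That last point matters because $\pi_0 MT\Theta$ embeds in $\bZ \times \pi_{-1}MT\overline{\Theta}$, not just $\bZ$: the operation scales the $\chi$-coordinate while fixing the second coordinate, and one must separately argue (via a $\overline{\Theta}$-cobordism built from the stable $\Lambda$-diffeomorphism, after re-choosing the lift $\rho'$) that $[W,\rho]$ and $[W',\rho']$ agree in $\pi_{-1}MT\overline{\Theta}$. Your plan addresses only the Euler-characteristic coordinate.
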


In Section \ref{sec:example} we give an example showing the third condition cannot be relaxed.

The main results of \cite{GR-W2, GR-W3, GR-W4}, summarised in \cite{GRWguide}, provide a map
\begin{equation}\label{eq:Scan1}
\cM^\Lambda(W,\lambda) \lra (\Omega^\infty MT\Theta) \moddd \hAut(u)
\end{equation}
which is an isomorphism on homology in a range of degrees, when regarded as a map to the path component which it hits.  Similarly there is a map
\begin{equation}\label{eq:Scan2}
\cM^\Lambda(W',\lambda') \lra (\Omega^\infty MT\Theta) \moddd \hAut(u)
\end{equation}
which is an isomorphism on homology in a range of degrees, when regarded as a map to the path component which it hits. The definition of the codomains is recalled below.  However, if $\chi(W) \neq \chi(W')$ then these two maps land in different path components, and the problem becomes to compare the homology of these two path components.

\begin{remark}
Using the results of Friedrich \cite{Friedrich}, Theorem~\ref{mainthm:stability} can be extended to manifolds with virtually polycyclic fundamental groups. In this case the constant $C$ should be replaced by $C+4+ 2h$ where $h$ denotes the Hirsch length of the common fundamental group of $W$ and $W'$.
\end{remark}

\subsection{Operations on infinite loop spaces}
\label{sec:oper-infin-loop}

The data involved in defining the common target of the maps \eqref{eq:Scan1} and \eqref{eq:Scan2} is a $\GL_{2n}(\bR)$-equivariant fibration $u : \Theta \to \Lambda$ with domain which is cofibrant as a $\GL_{2n}(\bR)$-space. Letting $B$ denote the Borel construction $\Theta \moddd \GL_{2n}(\bR)$, $MT\Theta$ is then the Thom spectrum of the inverse of the canonical $2n$-dimensional vector bundle over $B$, and $\Omega^\infty MT\Theta$ is its associated infinite loop space. By functoriality the group-like topological monoid $\hAut(\Theta)$ of $\GL_{2n}(\bR)$-equivariant homotopy equivalences $f: \Theta \to \Theta$ acts on the infinite loop space $\Omega^\infty MT\Theta$, so the group-like submonoid $\hAut(u) = \{f \in \hAut(\Theta) \, | \, u \circ f = u\}$ does too. The target
$$(\Omega^\infty MT\Theta) \moddd \hAut(u)$$
of the maps \eqref{eq:Scan1} and \eqref{eq:Scan2} is the Borel construction for this action.

In order to prove Theorem \ref{mainthm:stability} we shall construct certain operations on the space
$\Omega^\infty MT\Theta$, in the case where the $\GL_{2n}(\bR)$-space $\Theta$ is obtained by restriction from a cofibrant $\GL_{2n+1}(\bR)$-space $\overline{\Theta}$. The space $\overline{B} = \overline{\Theta}  \hcoker \GL_{2n+1}(\bR)$ carries a canonical $(2n+1)$-dimensional vector bundle, and $MT\overline{\Theta}$ denotes its associated Thom spectrum; as above, by functoriality it carries an action of the monoid $\hAut(\overline{\Theta})$ of $\GL_{2n+1}(\bR)$-equivariant homotopy equivalences $f: \overline{\Theta} \to \overline{\Theta}$.

A key construction in this paper is a homotopy pullback diagram of infinite
loop spaces, equivariant for $\hAut(\overline{\Theta})$, of the form
\begin{equation}
  \label{eq:14}
  \begin{aligned}
  \xymatrix{
    \Omega^\infty MT\Theta \ar[r]\ar[d] & \Omega^{\infty-1}
    MT\overline{\Theta}\ar[d]\\
    Q(\overline{B}_+) \ar[r] & \Omega^\infty C_{st},
  }
  \end{aligned}
\end{equation}
whose bottom right corner has $\pi_0 \cong \bZ/2$ and all higher homotopy groups are $2$-power torsion, and the bottom horizontal map induces a surjection on $\pi_1$.  It induces an isomorphism
\begin{equation}\label{eq:13}
  \pi_0 MT\Theta \xrightarrow\cong \{(\chi,x) \in \bZ \times \pi_{-1}MT\overline{\Theta}
  \mid \chi \bmod 2 =  w_{2n}(x) \},
\end{equation}
whose first coordinate is given by the Euler class and whose second
coordinate is given by the stabilisation map.  To explain this claim and its notation, first note that the $2n$-dimensional vector bundle over $B$ has an Euler class $e \in H^{2n}(B; \bZ^{w_1})$, where the coefficients are twisted by the determinant of this vector bundle, and under the Thom isomorphism this gives a class $e \smile u_{-2n} \in H^0(MT\Theta;\bZ)$. Then $\chi$ is the value of this spectrum cohomology class on the Hurewicz image of an element of $\pi_0 MT\Theta$; geometrically, it assigns to such an element the Euler characteristic of a manifold representing it. Similarly, the $(2n+1)$-dimensional vector bundle over $\overline{B}$ has a $2n$th Stiefel--Whitney class $w_{2n} \in H^{2n}(\overline{B};\bZ/2)$, and under the Thom isomorphism this gives a class $w_{2n} \smile u_{-2n-1} \in H^{-1}(MT\overline{\Theta};\bZ/2)$. Then $w_{2n}(x)$ denotes the value of this spectrum cohomology class on the Hurewicz image of $x$.

\begin{theorem}\label{thm:scale-euler-char}
For $\chi \in \bZ$, write $\Omega^\infty_\chi MT\Theta$ for the inverse image of
  $\chi$ under the map $\Omega^\infty MT\Theta \to \bZ$ induced by the
  class $e \smile u_{-2n} \in H^0(MT\Theta; \bZ)$, i.e.\
  the union of the path components of the form $(\chi,?)$ under the
  bijection~\eqref{eq:13}.

  For any odd number $q$ there exists a self-map $MT\Theta \to MT\Theta$ inducing a map
  \begin{equation*}
    \psi^q: \Omega^\infty_\chi MT\Theta \lra \Omega^\infty_{q\chi} MT\Theta
  \end{equation*}
  such that
  \begin{enumerate}[(i)]
  \item\label{item:3} $\psi^q$ commutes (strictly) with the action of  $\hAut(\overline{\Theta})$,
  \item\label{item:1} $\psi^q$ is over the identity map of $\Omega^{\infty -1}
    MT\overline{\Theta}$,
  \item\label{item:2} $\psi^q$ induces an isomorphism in homology with coefficients in any $\bZ[q^{-1}]$-module.
  \end{enumerate}
\end{theorem}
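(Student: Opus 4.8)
The plan is to realise $\psi^q$ as the self-map
\[
  \psi^q \;=\; \id_{MT\Theta} + \tfrac{q-1}{2}\cdot(\rho\circ p)\;\colon\; MT\Theta\longrightarrow MT\Theta ,
\]
where $p\colon MT\Theta\to\Sigma^\infty\overline B_+$ is the deloop of the left‑hand vertical map of the homotopy pullback square \eqref{eq:14} --- the parametrised Poincaré--Hopf map, which records the zeros (with signs) of a generic fibrewise vector field --- and $\rho\colon\Sigma^\infty\overline B_+\to MT\Theta$ is the fibre inclusion in the cofibre sequence $\Sigma^\infty\overline B_+\xrightarrow{\ \rho\ }MT\Theta\xrightarrow{\ s\ }\Sigma MT\overline\Theta$ coming from the (twisted) Gysin sequence of the bundle $B\to\overline B$ with fibre $S^{2n}$; equivalently $\Omega^\infty\rho$ is the inclusion of the homotopy fibre of the top map of \eqref{eq:14}, and on points $\rho$ sends $b\in\overline B$ to the $2n$-sphere carrying the corresponding $\Theta$-structure. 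The integer multiple $\tfrac{q-1}{2}\cdot(\rho p)$ and the sum are formed using the co-group structure of the spectrum $MT\Theta$; since $\hAut(\overline\Theta)$ acts through the base only and so respects this structure, and since $p$ and $\rho$ are natural in $\overline\Theta$, the resulting self-map is strictly $\hAut(\overline\Theta)$-equivariant. This gives condition (i), and conditions (ii), (iii) may then be checked in the homotopy category.

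The geometric heart of the argument is the relation
\[
  p\circ\rho\;\simeq\;2\cdot\id_{\Sigma^\infty\overline B_+} ,
\]
which I would establish by a parametrised Poincaré--Hopf computation on the fibre $S^{2n}\cong\GL_{2n+1}(\R)/\GL_{2n}(\R)$: a gradient-like vector field on the $\Theta$-sphere $\rho(b)$ has exactly two zeros, both of index $+1$ (signed count $\chi(S^{2n})=2$), and the two points of $\overline B=\overline\Theta\hcoker\GL_{2n+1}(\R)$ obtained by reading off the $\Theta$-structure at these zeros coincide, because the framings of $\epsilon^1\oplus TS^{2n}$ at the two poles differ by the element of $\GL_{2n+1}(\R)$ reflecting the normal line, which is quotiented out in $\overline B$. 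Thus applying $\rho$ and then $p$ doubles each labelled point. The only other input is $s\circ\rho\simeq 0$, which holds tautologically, the nullhomotopy being part of the cofibre sequence above and hence $\hAut(\overline\Theta)$-equivariant.

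Granting these two relations the remaining properties are formal. For condition (ii): $s\circ\psi^q = s+\tfrac{q-1}{2}\,(s\circ\rho)\circ p\simeq s$ via $\tfrac{q-1}{2}$ times the equivariant nullhomotopy of $s\circ\rho$ composed with $p$, so $\psi^q$ lies over the identity of $\Omega^{\infty-1}MT\overline\Theta=\Omega^\infty\Sigma MT\overline\Theta$. For condition (iii): writing $x=\rho p\in[MT\Theta,MT\Theta]$, biadditivity of composition together with $p\rho\simeq2\,\id$ gives $x^2=\rho\circ(p\rho)\circ p\simeq 2x$, so in the ring $[MT\Theta,MT\Theta]\otimes\bZ[q^{-1}]$ the element $\psi^q=1+\tfrac{q-1}{2}x$ is invertible, with inverse $1-\tfrac{q-1}{2q}x$; hence its image in $\mathrm{End}_{\bZ[q^{-1}]}\!\big(H_*(MT\Theta;\bZ[q^{-1}])\big)$ is a unit, so $\psi^q$ is an isomorphism on $\bZ[q^{-1}]$-homology, and then on homology with coefficients in an arbitrary $\bZ[q^{-1}]$-module by a universal coefficient argument over the Dedekind ring $\bZ[q^{-1}]$ --- in particular also on the homology of $\Omega^\infty MT\Theta$. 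Finally, on $\pi_0$: by \eqref{eq:13} one has $\pi_0 MT\Theta=\{(\chi,x)\mid\chi\equiv w_{2n}(x)\bmod 2\}\subseteq\bZ\times\pi_{-1}MT\overline\Theta$ with $p_*(\chi,x)=\chi$, while $\rho_*(1)=(\chi(S^{2n}),0)=(2,0)$ (the second coordinate vanishes since $s\rho\simeq0$); therefore $\psi^q_*(\chi,x)=(\chi,x)+\tfrac{q-1}{2}(2\chi,0)=(q\chi,x)$, and $q\chi\equiv\chi\equiv w_{2n}(x)\bmod 2$ since $q$ is odd, so $\psi^q$ carries $\Omega^\infty_\chi MT\Theta$ into $\Omega^\infty_{q\chi}MT\Theta$.

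The step I expect to be the main obstacle is the relation $p\circ\rho\simeq 2\cdot\id$: it is the one place where the geometry of $\Theta$-manifolds enters, requiring one to track the $\Theta$-structures at the two zeros of the vector field on the fibre sphere and verify that the ambient $\GL_{2n+1}(\R)$-quotient identifies them; the rest is manipulation in the homotopy category together with routine point-set care to keep all maps and homotopies strictly $\hAut(\overline\Theta)$-equivariant. Equivalently, one could read $\psi^q$ off \eqref{eq:14} directly, as the self-map of the homotopy pullback induced by multiplication by $q$ on $Q(\overline B_+)$, the identity on $\Omega^{\infty-1}MT\overline\Theta$, and the equivariant homotopy $q\cdot\tilde b\simeq\tilde b$ --- which exists because $(q-1)\tilde b$ factors through the fibre inclusion $\hofib(\tilde b)=\Sigma^\infty\overline B_+\xrightarrow{p\rho}\Sigma^\infty\overline B_+$, again using $p\rho\simeq 2\,\id$; this yields the same operation.
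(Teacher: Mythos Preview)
Your construction $\psi^q=\id+k\,\rho p$ (with $k=(q-1)/2$) is essentially the paper's operation, reorganised so that the self-map is written directly on $MT\Theta$ rather than on the lower-left corner of the square~\eqref{eq:14}. In the paper's notation your $\rho$ is its pretransfer $p\colon\Sigma^\infty\overline B_+\to MT\Theta$, your $p$ is its $sz$, and your composite $p\rho$ is its Becker--Gottlieb self-map $st$ of $\Sigma^\infty\overline B_+$; the paper applies $\id+k\,st$ to $\Sigma^\infty\overline B_+$ and pulls back through the square, which recovers your $\id+k\,\rho p$ on $MT\Theta$.

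Two caveats. First, your key relation $p\rho\simeq 2\cdot\id$ as a \emph{homotopy of maps} is stronger than what your geometric sketch establishes and stronger than what is needed. The two zeros over each $b$ do both project to $b$, but the degree-$2$ self-map of $S^N$ this produces varies with $b$ via a map $\overline B\to Q_2 S^0$ that you have not shown to be null; the paper explicitly refrains from claiming this. Fortunately your argument for (iii) only needs the homological identity $(p\rho)_*=2$ on $H_*(\Sigma^\infty\overline B_+)$, which is the genuine Becker--Gottlieb/Poincar\'e--Hopf statement: then $x_*^2=2x_*$ in $\End H_*(MT\Theta)$ and your inverse $1-\tfrac{k}{q}x_*$ works there. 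The paper reaches the same conclusion by noting that $\id+k\,st$ acts as multiplication by $q$ on $H_*(\Sigma^\infty\overline B_+)$ and invoking the pullback square. Your closing aside, using literal multiplication by $q$ on $Q(\overline B_+)$, runs into the same issue: $q\cdot\id$ and $1+k\,st$ agree on homology but are not obviously homotopic, and it is only the latter that is over $C_{st}$ by construction.

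Second, your appeal to ``the co-group structure of $MT\Theta$'' for strict $\hAut(\overline\Theta)$-equivariance is exactly where the paper spends most of its effort: it builds explicit fibrewise models over $\Gr_{2n+1}(\R^N)$ with a dedicated $S^1$-coordinate to form the sum $1+k\,st$ and the nullhomotopy over $C_{st}$ as honest equivariant maps. You correctly flag this as ``routine point-set care'', but it is the bulk of the proof.
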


  We shall also prove a version of Theorem~\ref{thm:scale-euler-char} for $q=2$, although it will be marginally weaker in that rather than the map $\psi^q$ being defined integrally and inducing an isomorphism with coefficients in any $\bZ[q^{-1}]$-module, the map $\psi^2$ will only be defined after localising the spaces involved away from $2$.
  
  \begin{theorem}\label{thm:scale-euler-char2}
  In the setup of Theorem \ref{thm:scale-euler-char}, if $\chi$ is
  even then there is a $\hAut(\overline{\Theta})$-equivariant weak equivalence of localised spaces
  \begin{equation*}
    \psi^2: (\Omega^\infty_\chi MT\Theta)[\tfrac12] \lra (\Omega^\infty_{2\chi}
    MT\Theta)[\tfrac12]
  \end{equation*}
  over the identity map of $(\Omega^{\infty-1} MT\overline{\Theta})[\frac12]$.
  \end{theorem}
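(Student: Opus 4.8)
The plan is to deduce everything from the homotopy pullback square~\eqref{eq:14} by inverting $2$, working one path component at a time. Since $\chi$ is even, both $\Omega^\infty_\chi MT\Theta$ and $\Omega^\infty_{2\chi} MT\Theta$ map, via the stabilisation map (the top horizontal map of~\eqref{eq:14}), into the subspace $Y \subseteq \Omega^{\infty-1} MT\overline{\Theta}$ consisting of those components $x$ with $w_{2n}(x)=0$; and by the computation of $\pi_0$ in~\eqref{eq:13} each of them is the homotopy pullback of $Q_m(\overline{B}_+) \to (\Omega^\infty C_{st})_0 \leftarrow Y$, for $m = \chi$, respectively $m = 2\chi$, where $Q_m(\overline{B}_+)$ denotes the degree-$m$ path component and $(\Omega^\infty C_{st})_0$ the basepoint component. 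This is exactly where the hypothesis that $\chi$ be even is needed: for odd $\chi$ the space $\Omega^\infty_\chi MT\Theta$ maps instead into the $w_{2n}=1$ part of $\Omega^{\infty-1}MT\overline{\Theta}$, which is disjoint from the image of $\Omega^\infty_{2\chi}MT\Theta$ (and may even be empty), so no equivalence over the identity of $\Omega^{\infty-1}MT\overline{\Theta}$ could exist.

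Next I would invert $2$. The space $(\Omega^\infty C_{st})_0$ is connected with $2$-power torsion higher homotopy groups, hence its localisation away from $2$ is weakly contractible; and since the bottom horizontal map of~\eqref{eq:14} is surjective on $\pi_1$, the homotopy fibre $F$ of $Q_\chi(\overline{B}_+) \to (\Omega^\infty C_{st})_0$ is connected, with $F[\tfrac12] \simeq Q_\chi(\overline{B}_+)[\tfrac12]$. As localisation away from $2$ is functorial (hence $\hAut(\overline{\Theta})$-equivariant) and commutes with the homotopy fibre sequences in play (all the spaces being simple), the homotopy pullback of the localised diagram $Q_\chi(\overline{B}_+)[\tfrac12] \to (\Omega^\infty C_{st})_0[\tfrac12] \simeq \ast \leftarrow Y[\tfrac12]$ collapses to the honest product, yielding an $\hAut(\overline{\Theta})$-equivariant weak equivalence
\begin{equation*}
  (\Omega^\infty_\chi MT\Theta)[\tfrac12] \;\simeq\; Q_\chi(\overline{B}_+)[\tfrac12] \times Y[\tfrac12]
\end{equation*}
over the projection to $Y[\tfrac12]$, which is the localised stabilisation map; and likewise for $2\chi$.

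Under these identifications I would then define $\psi^2$ to be $\delta[\tfrac12] \times \id_{Y[\frac12]}$, where $\delta \colon Q(\overline{B}_+) \to Q(\overline{B}_+)$ is the multiplication-by-$2$ map, that is, the infinite loop map underlying $2\cdot\id$ on the spectrum $\Sigma^\infty \overline{B}_+$ (equivalently, the $H$-space doubling map). This $\delta$ is natural in $\overline{B}$, so it commutes with the $\hAut(\overline{\Theta})$-action; on $\pi_0 = \bZ$ it is multiplication by $2$, so it restricts to $Q_\chi(\overline{B}_+) \to Q_{2\chi}(\overline{B}_+)$; and after inverting $2$ it is an isomorphism on all homotopy groups, hence $\delta[\tfrac12]\colon Q_\chi(\overline{B}_+)[\tfrac12] \xrightarrow{\ \sim\ } Q_{2\chi}(\overline{B}_+)[\tfrac12]$ is a weak equivalence. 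Thus $\psi^2$ is an $\hAut(\overline{\Theta})$-equivariant weak equivalence $(\Omega^\infty_\chi MT\Theta)[\tfrac12] \to (\Omega^\infty_{2\chi}MT\Theta)[\tfrac12]$, and it is over the identity of $(\Omega^{\infty-1}MT\overline{\Theta})[\tfrac12]$ because it is the identity on the $Y[\tfrac12]$-factor. A priori it is realised by a zig-zag, which may be rectified to a single map by functorial cofibrant--fibrant replacement in the Borel model structure on $\hAut(\overline{\Theta})$-spaces.

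The part that requires care is the componentwise bookkeeping of~\eqref{eq:14}: identifying $\Omega^\infty_\chi MT\Theta$ with the stated homotopy pullback of components, reading off from the constructions of~\eqref{eq:14} and~\eqref{eq:13} that the stabilisation and Euler-characteristic maps behave as claimed on $\pi_0$, and checking that $\pi_1$-surjectivity of the bottom map is what makes the relevant homotopy fibre connected. Everything else is formal: inverting $2$ kills the obstruction, which lives in the ($2$-locally trivial) bottom-right corner. The reason this $\psi^2$ is weaker than $\psi^q$ for odd $q$ is precisely that it does \emph{not} come from a self-map of the spectrum $MT\Theta$: the spectrum-level doubling map would also double the $MT\overline{\Theta}$-coordinate, and the compensating "division by $2$" of that coordinate exists only after inverting $2$ and only on the product splitting of spaces.
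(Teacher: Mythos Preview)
Your argument is correct and follows essentially the same strategy as the paper: both use the multiplication-by-$2$ self-map of $Q(\overline{B}_+)$ (induced by $2:S^0\to S^0$) together with the pullback square~\eqref{eq:14}, and both exploit that this map respects parity of the degree so that for even $\chi$ it is compatible with the map to the bottom-right corner. The only technical difference is how that corner is trivialised: the paper first replaces $C_{st}$ by $H\bF_2$ via the spectrum map $w_{2n}$, so that $\Omega^\infty H\bF_2\cong\bZ/2$ is already discrete and the doubling map on even components commutes \emph{strictly} with the map to it; you instead localise $(\Omega^\infty C_{st})_0$ away from $2$ directly to make it contractible, and then read off a product splitting. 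Both routes produce the same $\psi^2$ up to the inevitable zig-zag coming from the comparison $(\Omega^\infty MT\Theta)[\tfrac12]\simeq (\text{pullback})[\tfrac12]$, and your remark about rectifying via the Borel model structure applies equally to the paper's version.
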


  The operations in Theorems~\ref{thm:scale-euler-char} and~\ref{thm:scale-euler-char2} will arise from self-maps of the lower left corner in~\eqref{eq:14}.
  
  The proof of Theorem \ref{mainthm:stability} will use these operations to give endomorphisms of the space $ (\Omega^\infty MT\Theta) \moddd \hAut(u)$ which mix path-components, allowing us to compare the path components hit by the maps \eqref{eq:Scan1} and \eqref{eq:Scan2}. This strategy is analogous to arguments of Bendersky--Miller \cite{BenderskyMiller} and Cantero--Palmer \cite{CanteroPalmer} for cohomology of configuration spaces. This strategy has also been used by Krannich \cite{KrannichExotic} to show that  $H^i(\cM^{\mathrm{or}}(W, \lambda);\Coeffs) \cong H^i(\cM^{\mathrm{or}}(W \# \Sigma, \lambda);\Coeffs)$ for $(W, \lambda)$ an oriented manifold of dimension $2n > 4$ and $\Sigma$ an exotic sphere, in a stable range of degrees when the order of $[\Sigma] \in \Theta_{2n}$ is invertible in $\mathrm{End}_\bZ(\Coeffs)$.

\subsection*{Acknowledgements}  SG was supported by the European Research Council (ERC) under the European Union's Horizon 2020 research and innovation programme (grant agreement No.\ 682922), the Danish National Research Foundation through the Centre for Symmetry and Deformation (DNRF92), and the EliteForsk Prize. ORW was supported by EPSRC grant EP/M027783/1, the ERC under the European Union's Horizon 2020 research and innovation programme (grant agreement No.\ 756444), and a Philip Leverhulme Prize from the Leverhulme Trust.

\section{Proof of Theorem \ref{mainthm:stability}}

We first explain how to deduce Theorem \ref{mainthm:stability} from Theorems \ref{thm:scale-euler-char} and \ref{thm:scale-euler-char2}.

Let $\lambda : \mathrm{Fr}(\epsilon^1 \oplus TW) \overset{\rho}\to \overline{\Theta} \overset{\overline{u}}\to \Lambda$ be a factorisation into an $n$-connected $\GL_{2n+1}(\bR)$-equivariant cofibration $\rho$ and a $n$-co-connected $\GL_{2n+1}(\bR)$-equivariant fibration $\overline{u}$, and as above we write $\Theta$ for the underlying $\GL_{2n}(\bR)$-space of $\overline{\Theta}$ and $u$ for the underlying $\GL_{2n}(\bR)$-equivariant map of $\overline{u}$. There is then a map
\begin{equation}\label{eq:CompMap}
\cM^\Lambda(W,\lambda) \lra (\Omega^\infty MT\Theta) \moddd \hAut(u)
\end{equation}
which by \cite[Corollary 1.9]{GR-W4} is an isomorphism on $i$th (co)homology onto the path-component which it hits, as long as $i \leq \tfrac{\overline{g}(W, \lambda)-3}{2}$. (Note that by considering a $\GL_{2n+1}(\bR)$-space $\Lambda$ rather than a $\GL_{2n}(\bR)$-space, the  tangential structure $\Theta$ is ``spherical'' by the discussion after \cite[Definition 3.2]{GRWguide}, and so the stability range is as claimed.) Here $\bar{g}(W, \lambda)$ is the \emph{stable $\Lambda$-genus} of $(W, \lambda)$, the largest $g \in \bN$ for which there exists $h \in \bN$ such that $W \# h (S^n \times S^n)$ is $\Lambda$-diffeomorphic to $W_0 \# (g+h) (S^n \times S^n)$ for some $(W_0,\lambda_0)$.

Let $(W_0, \lambda_0)$ be a manifold stably $\Lambda$-diffeomorphic to $(W,\lambda)$ and minimising the quantity $(-1)^n \chi(W_0)$. Such a manifold has stable $\Lambda$-genus zero and hence for large enough $h$ we must have that $W \# h (S^n \times S^n)$ is $\Lambda$-diffeomorphic to $W_0 \# (h + \overline{g}(W, \lambda)) (S^n \times S^n)$, so
$$\overline{g}(W, \lambda) = (-1)^n(\chi(W) - \chi(W_0))/2.$$
It follows that \eqref{eq:CompMap} is an isomorphism on $i$th (co)homology as long as 
$$(-1)^n \chi(W) \geq 4i + \left(6+ (-1)^n \chi(W_0)\right).$$

If $(W', \lambda')$ is stably $\Lambda$-diffeomorphic to $(W, \lambda)$ then the same analysis applies, and there is a map
\begin{equation}\label{eq:CompMap2}
\cM^\Lambda(W',\lambda') \lra (\Omega^\infty MT\Theta) \moddd \hAut(u)
\end{equation}
which is an isomorphism on $i$th (co)homology onto the path-component which it hits, as long as
$$(-1)^n \chi(W') \geq 4i + \left(6+ (-1)^n \chi(W_0)\right).$$

By assumption we may write
$$a \cdot \chi(W) = b \cdot \chi(W')$$
for integers $a$ and $b$ all of whose prime factors are invertible in $\mathrm{End}_\bZ(\Coeffs)$. Furthermore the two Euler characteristics have the same parity, as (de)stabilisation changes the Euler characteristic by $\pm 2$, so if either $a$ or $b$ is even then both $\chi(W)$ and $\chi(W')$ are even too.

By Theorems \ref{thm:scale-euler-char} and \ref{thm:scale-euler-char2}, writing $\psi^x = \psi^{x/2^{v_2(x)}} \circ (\psi^{2})^{v_2(x)}$, (after perhaps implicitly localising away from 2) there are maps
\begin{equation*}
\xymatrix{
\Omega^\infty_{\chi(W)}MT\Theta \ar[rrr]^-{\psi^a} &&& \Omega^\infty_{a \chi(W)}MT\Theta \ar@{=}[d]\\
\Omega^\infty_{\chi(W')}MT\Theta \ar[rrr]^-{\psi^b} &&& \Omega^\infty_{b \chi(W')}MT\Theta
}
\end{equation*}
which are $\hAut(\overline{\Theta})$-equivariant and induce isomorphisms on $\Coeffs$-homology, as $\Coeffs$ is a $\bZ[a^{-1}, b^{-1}]$-module. By construction these maps do not change the $\pi_{-1} MT\overline{\Theta}$-component: we now analyse the components corresponding to $W$ and $W'$.

We now claim that $\psi^a([W, \rho]) = \psi^b([W', \rho']) \in \pi_0(\Omega^\infty MT\Theta)$ for a suitable choice of $\rho': \Fr(\epsilon^1 \oplus TW') \to \overline{\Theta}$ lifting $\lambda'$.  Since these two elements of $\pi_0(MT\Theta)$ have the same Euler characteristic, it suffices to arrange that they also have the same $\pi_{-1} MT\overline{\Theta}$-component.  The stable $\Lambda$-diffeomorphism from $(W,\lambda)$ to $(W', \lambda')$ gives a $\Lambda$-cobordism
$$X : W \# g (S^n \times S^n) \leadsto W' \# g' (S^n \times S^n)$$
which is furthermore an $h$-cobordism. We can therefore extend the $\overline{\Theta}$-structure given by $(W, \rho)$, stabilised, to a $\overline{\Theta}$-structure on $X$ lifting the given $\Lambda$-structure, and hence obtain a $\overline{\Theta}$-manifold $(W' \# g' (S^n \times S^n), \rho'')$ whose underlying $\Lambda$-manifold $(W' \# g' (S^n \times S^n), u \circ \rho'')$ is the stabilisation of $(W', \lambda')$. Now the $\overline{\Theta}$-manifolds
\begin{equation}\label{eq:NonDiffStab}
(W' \# g' (S^n \times S^n), \rho'') \text{ and } (W', \rho') \# g' (S^n \times S^n)
\end{equation}
need not be $\overline{\Theta}$-diffeomorphic, but must differ by an equivalence $f : \overline{\Theta} \to \overline{\Theta}$ over $\Lambda$ (see \cite[Lemma 9.2]{GR-W4}). However the $\overline{\Theta}$-structure $\rho'$ on $W'$ is merely a choice of lift of $\lambda'$ along $\overline{u}$, and by re-choosing it to be $f \circ \rho'$ we may then suppose that the manifolds \eqref{eq:NonDiffStab} are indeed $\overline{\Theta}$-diffeomorphic. With this choice we therefore have the desired
$$[W, \rho] = [W', \rho'] \in \pi_{-1} MT\overline{\Theta},$$
using the $\overline{\Theta}$-cobordism $X$ and the fact that this cobordism theory is insensitive to stabilisation by standard $S^n \times S^n$'s.  

Denoting by $[[W, \lambda]] \subset \pi_0 MT\Theta$ the $\pi_0 \hAut(\overline{u})$-orbit of $[W, \rho]$, and similarly $[[W', \lambda']]$, and using the forgetful homomorphism $\hAut(\overline{u}) \to \hAut(\overline{\Theta})$ to let the monoid $\hAut(\overline{u})$ act on $\Omega^\infty MT\Theta$, we therefore have a zig-zag of maps
\begin{equation}\label{eq:ZigZag}
\left(\Omega^\infty_{[[W, \lambda]]}MT\Theta\right) \hcoker \hAut(\overline{u}) \lra \cdot \longleftarrow \left(\Omega^\infty_{[[W', \lambda']]}MT\Theta\right) \hcoker \hAut(\overline{u})
\end{equation}
which induce isomorphisms on homology with $\Coeffs$-coefficients. The argument is completed by the following lemma.

\begin{lemma}
The natural map $\hAut(\overline{u}) \to \hAut(u)$ is a weak equivalence.
\end{lemma}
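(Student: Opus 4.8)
The plan is to analyze the fibration sequence underlying the forgetful map $\hAut(\overline{u}) \to \hAut(u)$ and show both the fibre and the map on $\pi_0$ are trivial/bijective in the appropriate sense. First I would recall that $\overline{u} : \overline{\Theta} \to \Lambda$ is a $\GL_{2n+1}(\bR)$-equivariant fibration and $u : \Theta \to \Lambda$ is its underlying $\GL_{2n}(\bR)$-equivariant map, which is a fibration as well. An element of $\hAut(\overline{u})$ is a $\GL_{2n+1}(\bR)$-equivariant homotopy equivalence $f : \overline{\Theta} \to \overline{\Theta}$ with $\overline{u} \circ f = \overline{u}$, and an element of $\hAut(u)$ is a $\GL_{2n}(\bR)$-equivariant homotopy equivalence $g : \Theta \to \Theta$ with $u \circ g = u$; the natural map simply restricts the equivariance. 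The key point is that $\overline{\Theta}$ is obtained from $\Theta$ together with the $\GL_{2n+1}(\bR)$-action, but since $\GL_{2n+1}(\bR)$ deformation retracts onto $\OO(2n+1)$ and $\GL_{2n}(\bR)$ onto $\OO(2n)$, and $\OO(2n+1)/\OO(2n) \simeq S^{2n}$, one expects the difference between the two mapping spaces to be controlled by sections of a bundle with fibre related to $S^{2n}$ — and the $n$-co-connectedness of $\overline{u}$ (equivalently of $u$) should kill the relevant obstruction and indeterminacy.

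The cleanest route I would pursue: identify $\hAut(u)$ with the space of $\GL_{2n}(\bR)$-equivariant self-maps over $\Lambda$ that are equivalences, and note that restriction along $\GL_{2n}(\bR) \hookrightarrow \GL_{2n+1}(\bR)$ gives, at the level of the relevant function complexes, a map whose homotopy fibre I would compute using the fibration $\GL_{2n}(\bR) \backslash \GL_{2n+1}(\bR)$. Concretely, a $\GL_{2n}(\bR)$-equivariant map $\Theta \to \Theta$ over $\Lambda$ corresponds to a section over $B = \Theta \hcoker \GL_{2n}(\bR)$ of a bundle with fibre $\Theta$ (or rather the fibrewise mapping space of $u$ over $\Lambda$), while a $\GL_{2n+1}(\bR)$-equivariant one corresponds to a section over $\overline{B} = \overline{\Theta} \hcoker \GL_{2n+1}(\bR)$ of the analogous bundle; the natural map $B \to \overline{B}$ is a fibration with fibre $S^{2n}$ (up to homotopy, since $\OO(2n+1)/\OO(2n) = S^{2n}$), and pulling back the $(2n+1)$-bundle to $B$ splits off a trivial line, which is exactly why $u$ and $\overline{u}$ have the same $n$-co-connectedness. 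Because $\overline{u}$ is $n$-co-connected, the fibrewise homotopy type it parametrises has homotopy groups vanishing above degree $n$, so maps and homotopies of maps over $\Lambda$ are detected on the $(n)$-skeleton, where $B \to \overline{B}$ is an equivalence (the $S^{2n}$-fibre being $(2n-1)$-connected with $2n > n$); this forces the restriction map on components and on all homotopy groups to be an isomorphism.

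The main obstacle, and the step I would spend the most care on, is setting up the correspondence between equivariant self-maps over $\Lambda$ and sections precisely enough that the connectivity estimate bites — in particular being careful that "$n$-co-connected" for the fibration $\overline{u}$ really does mean the homotopy fibres are Postnikov-truncated at level $n$, so that the obstruction theory comparing sections over $B$ versus over $\overline{B}$ only sees skeleta below $n+1 < 2n$. Once that is pinned down, the fact that $B \to \overline{B}$ has $(2n-1)$-connected homotopy fibre makes the comparison of section spaces an equivalence through a large range, in particular a $\pi_0$-bijection and a $\pi_i$-isomorphism for all $i$ in the relevant range, hence a weak equivalence of the group-like monoids after restricting to equivalences. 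I would also need to check that restricting to the sub-monoids of homotopy equivalences (rather than all self-maps over $\Lambda$) does not affect this — but this is automatic, since being an equivalence is a condition on $\pi_0$ which is preserved and detected under the equivalence just established.
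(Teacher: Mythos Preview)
Your argument is correct and is essentially the same as the paper's: both reduce the comparison to the fact that the relevant domain changes by something with homotopy fibre $S^{2n}$ (hence $2n$-connected) while the target $\overline{u}$ is $n$-co-connected, so the mapping spaces over $\Lambda$ agree and one then restricts to the components of equivalences. The only difference is packaging: you pass to Borel constructions and phrase things as section spaces over $B$ versus $\overline{B}$, whereas the paper uses the induction--restriction adjunction directly, identifying $\mathrm{map}^{/\Lambda}_{\GL_{2n}(\bR)}(\Theta,\Theta)$ with $\mathrm{map}^{/\Lambda}_{\GL_{2n+1}(\bR)}(\GL_{2n+1}(\bR)\times_{\GL_{2n}(\bR)}\Theta,\overline{\Theta})$ and comparing the latter to $\mathrm{map}^{/\Lambda}_{\GL_{2n+1}(\bR)}(\overline{\Theta},\overline{\Theta})$ via the $2n$-connected counit map; this is the same connectivity argument in more compact form.
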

\begin{proof}
Working in the categories of $\GL_{2n}(\bR)$-spaces over $\Lambda$, or $\GL_{2n+1}(\bR)$-spaces over $\Lambda$,  we have
$$\mathrm{map}^{/\Lambda}_{\GL_{2n}(\bR)}(\Theta, \Theta) = \mathrm{map}^{/\Lambda}_{\GL_{2n+1}(\bR)}(\GL_{2n+1}(\bR) \times_{\GL_{2n}(\bR)}\Theta, \overline{\Theta})$$
but the natural $\GL_{2n+1}(\bR)$-equivariant map $\GL_{2n+1}(\bR) \times_{\GL_{2n}(\bR)}\Theta \to \overline{\Theta}$ has homotopy fibre $\GL_{2n+1}(\bR)/\GL_{2n}(\bR) \simeq S^{2n}$ so is $2n$-connected, whereas $\overline{u} : \overline{\Theta} \to \Lambda$ is $n$-co-connected, so the restriction map
$$\mathrm{map}^{/\Lambda}_{\GL_{2n+1}(\bR)}(\overline{\Theta}, \overline{\Theta}) \lra \mathrm{map}^{/\Lambda}_{\GL_{2n+1}(\bR)}(\GL_{2n+1}(\bR) \times_{\GL_{2n}(\bR)}\Theta, \overline{\Theta})$$
is an equivalence. The claim now follows by restricting to the path-components of homotopy equivalences.
\end{proof}

\begin{remark}
This argument also gives a conclusion about homology with certain local coefficients. The maps \eqref{eq:CompMap} and \eqref{eq:CompMap2} are in fact acyclic in a range of degrees \cite[Corollary 1.9]{GR-W4}, and the maps $\psi^q$ are acyclic with $\bZ[q^{-1}]$-module coefficients (as they are infinite loop maps which are isomorphisms on homology with these coefficients) so remain so after taking homotopy orbits by $\hAut(\overline{u})$.

So if $\mathcal{\Coeffs}$ is a system of local coefficients on the middle space of the zig-zag \eqref{eq:ZigZag}, with typical fibre $\Coeffs$ and having $v_p(\chi(W))=v_p(\chi(W'))$ for all primes $p$ which are not invertible in $\mathrm{End}_\bZ(\Coeffs)$, then there is also an isomorphism $H^i(\cM^\Lambda(W,\lambda);\mathcal{\Coeffs}) \cong H^i(\cM^\Lambda(W',\lambda');\mathcal{\Coeffs})$ in a range of degrees.
\end{remark}

\section{Proof of Theorems \ref{thm:scale-euler-char} and \ref{thm:scale-euler-char2}}

The proof of Theorem \ref{thm:scale-euler-char} is by an explicit construction of $\psi^q$ as a map of spectra.  The main ingredient is a certain commutative diagram of spectra, which we first describe informally. It is
\begin{equation*}
  \xymatrix{
    \Sigma^\infty \overline{B}_+ \ar@{=}[d] \ar[r]^p & {MT\Theta}
    \ar[r] \ar[d]^{sz} & S^1 \wedge MT\overline{\Theta}\ar[d]\\
    \Sigma^\infty \overline{B}_+ \ar[r]^{st} & \Sigma^\infty
    \overline{B}_+    \ar[r]  & C_{st}
  }
\end{equation*}
where $s: B \to \overline{B}$ is the natural map of Borel constructions.  The map $s$ is homotopy equivalent to a smooth fibre bundle with fibres $S^{2n}$ so we have a Becker--Gottlieb transfer $t:
\Sigma^\infty \overline{B}_+ \to \Sigma^\infty B_+$, factoring as a
pre-transfer $p: \Sigma^\infty \overline{B}_+ \to MT\Theta$ composed
with a map $z: MT\Theta \to \Sigma^\infty B_+$ induced by the zero
section of $\theta$.  The spectrum $C_{st}$ is defined to be the
homotopy cofibre of $st$, and both rows are cofibre sequences.  It
follows that the right square in the diagram is a homotopy
pullback, and hence we get the homotopy pullback diagram of infinite loop spaces~\eqref{eq:14} mentioned in the introduction.  On spectrum homology the map $st$ induces multiplication by $\chi(S^{2n}) = 2$, from which it follows that the homology and hence homotopy groups of $C_{st}$ are 2-power torsion. The space $\overline{B}$ is path connected, because $W$ is, so $\pi_0(\Sigma^\infty \overline{B}_+) = H_0(\Sigma^\infty \overline{B}_+;\bZ) = \bZ$. Thus $\pi_0(C_{st}) = \bZ/2$, and the map $\Sigma^\infty \overline{B}_+ \to C_{st}$ is surjective on $\pi_1$ because $st$ is injective on $\pi_0$.

To produce an endomorphism of $\Omega^\infty MT\Theta$
satisfying part~(\ref{item:1}) of the theorem, it therefore suffices to
produce an endomorphism of $\Sigma^\infty\overline{B}_+$ over
$C_{st}$.  For $q = 1 + 2k$, we may use the map
$\mathrm{id} + kst: \Sigma^\infty \overline{B}_+ \to \Sigma^\infty
\overline{B}_+$ which is obviously over $C_{st}$, at least in the
homotopy category, since $C_{st}$ is the cofibre of the map $st$.  In
spectrum homology, $st$ multiplies by $\chi(S^{2n}) = 2$ and hence
$\mathrm{id} + kst$ induces multiplication by $1 + 2k = q$ on $H_*(\Sigma^\infty \overline{B}_+;\bZ)$
ensuring part~(\ref{item:2}) of the theorem. Furthermore it acts by multiplication by $q$ on $\pi_0\Sigma^\infty \overline{B}_+ = \pi_0 Q(\overline{B}_+) = \bZ$, so indeed sends $\Omega^\infty_\chi MT\Theta$ to $\Omega^\infty_{q\chi} MT\Theta$.

It remains to explain how to achieve part~(\ref{item:3}) of the
theorem, that the continuous action of the topological monoid
$\hAut(\overline{\Theta})$ on the space $\Omega^\infty MT\Theta$ 
commutes with $\psi^q$.  It is not sufficient that $\psi^q$ commutes
up to homotopy with the action of individual elements of
$\hAut(\overline{\Theta})$, since we want to descend $\psi^q$ to the
homotopy orbit space.  To give a convincing proof, it seems best to
spell out a point-set model for the square~\eqref{eq:14}.

\begin{proof}[Proof of Theorem~\ref{thm:scale-euler-char}]
  As explained above, it remains to give a point-set model for the
  diagram~\eqref{eq:14} and the self-map $\mathrm{id} + kst$ of
  $Q(\overline{B}_+)$ over $\Omega^\infty C_{st}$, all of which
  commutes strictly with the action of $\hAut(\overline{\Theta})$.

We must adopt some conventions.  Let us consider $\GL_{2n}(\bR)$ as lying inside $\GL_{2n+1}(\bR)$ using the last $2n$ coordinates. Let us consider $\R^{N-1}$ as lying inside $\R^N$ as the subspace of vectors whose last coordinate is 0, and take $\R^\infty$ to be the direct limit. To form the Borel constructions we shall take $E\GL_{2n}(\bR) := \mathrm{Fr}_{2n}(\bR^\infty)$, and similarly take $E\GL_{2n+1}(\bR) := \mathrm{Fr}_{2n+1}(\bR \oplus \bR^\infty)$. The map $\mathrm{Fr}_{2n}(\bR^\infty) \to \mathrm{Fr}_{2n+1}(\bR \oplus \bR^\infty)$ which adds the basis vector of the first $\bR$-summand as the first element of the $(2n+1)$-frame is then equivariant for the inclusion $\GL_{2n}(\bR) \subset \GL_{2n+1}(\bR)$.

Then we have $B\GL_{2n+1}(\bR) = \Gr_{2n+1}(\bR \oplus \R^\infty)$, which we may filter in the usual way by $\Gr_{2n+1}(\bR \oplus \R^{N-1})$. Pulling back this filtration along the map $\overline{\theta} : \overline{B} \to \Gr_{2n+1}(\R^\infty)$, we set
$\overline{B}_{N} := (\overline{\theta})^{-1}(\Gr_{2n+1}(\R \oplus \R^{N-1}))$. There is an induced map $\overline{\theta}_N: \overline{B}_N \to \Gr_{2n+1}(\R \oplus \R^{N-1})$ and we shall write $\overline{\theta}_N^* \gamma^\perp = \overline{\theta}_N^* \gamma_{2n+1,N}^\perp$ for the pullback of the
  $(N - 2n -1)$-dimensional bundle of orthogonal complements. Then
  $MT\overline{\Theta}$ is the spectrum with $N$th space given by the Thom space
  $(\overline{B}_N) ^{\overline{\theta}_N^* \gamma^\perp}$, so that
  \begin{equation*}
    \Omega^{\infty-1} MT\overline{\Theta} = \colim_{N \to \infty}
    \Omega^{N-1} (\overline{B}_N)^{\overline{\theta}_N^* \gamma^\perp}.
  \end{equation*}
We similarly define $\theta_N : B_N \to \Gr_{2n}(\R^{N})$, and hence the spectrum $MT\Theta$. There is a map 
\begin{equation}\label{eq:jkgh}
\Gr_{2n}(\R^{N-1}) \hookrightarrow \Gr_{2n+1}(\R \oplus \R^{N-1}),
\end{equation}
given by direct sum with the 1-dimensional vector space given by the first $\bR$-summand, which induces a map $B_{N-1} \to \overline{B}_N$. The map \eqref{eq:jkgh} is
  $2n$-connected, but is covered by an $(N-2)$-connected map
  $\Gr_{2n}(\R^{N-1}) \to S(\gamma_{2n+1,N})$ and hence gives a $(N-2)$-connected map $B_{N-1} \to S(\overline{\theta}_N^*\gamma_{2n+1,N})$. Passing to Thom spaces this gives a $(2N - 2n -
  2)$-connected map
  \begin{equation*}
    S^1 \wedge (B_{N-1})^{(\overline{\theta}_N\vert_{B_{N-1}})^* \gamma_{2n,N-1}^\perp} \lra S(\overline{\theta}_N^*\gamma_{2n+1,N})^{\overline{\theta}_N^* \gamma_{2n+1, N}^\perp}.
  \end{equation*}
  These combine to define a map from $MT\Theta$ to the spectrum whose
  $(N-1)$st space is $S(\overline{\theta}_N^*\gamma_{2n+1,N})^{\overline{\theta}_N^* \gamma_{2n+1, N}^\perp}$, and this map is a weak equivalence.  This map is also $\hAut(\overline{\Theta})$-equivariant.  (This weak equivalence does not come  with a spectrum map in the other direction, let alone an equivariant one.)

  The square~\eqref{eq:14} will be assembled from a square of spaces
  fibred over $\overline{B}_N$, and we first explain the constructions
  on fibres.  Let $V \in \Gr_{2n+1}(\R^N)$ and write $S(V)$ for the
  unit sphere of $V$ and $S^V$ for the one-point compactification.  If
  $x \in \R^N$ we shall write $\pi_V(x) \in V$ for the orthogonal projection.  If
  $x \in V \setminus 0$ we shall write $\pi_S(x) = x/|x| \in S(V)$ for
  the nearest point in the sphere.  We will describe certain explicit
  maps $p(V): S^V \to S(V)^{\epsilon^1}$ and $z(V): S(V)^{\epsilon^1}
  \to S(V)_+ \wedge S^V$, and explain how the composition $z(V) \circ
  p(V)$ gives rise to a model for the Becker--Gottlieb transfer for a
  linear sphere bundle (indeed, we will just unwrap the definition of \cite[Section 3]{BG} in this case).

  The map
  \begin{equation*}
    p(V): S^V  \lra S(V)^{\epsilon^1},
  \end{equation*}
  is induced by the Pontryagin--Thom construction applied to the
  embedding $S(V) \subset V$.  In formulas, we can take e.g.\
  \begin{equation*}
    p(V)(x) = (\pi_S(x), \log |x|)
    \in S(V)_+ \wedge S^1 = S(V)^{\epsilon^1}
  \end{equation*}
  when $x \neq 0, \infty \in S^V$.  The Thom space $S(V)^{\epsilon^1}$
  is homeomorphic to the quotient $S^V/S^0$, and under this
  identification the map $p(V)$ is the quotient map.
  
  The map
  \begin{equation*}
    z(V): S(V)^{\epsilon^1} \lra S(V)^{TS(V)\oplus \epsilon^1} = S(V)_+
    \wedge S^V
  \end{equation*}
  is given by the zero section of the tangent bundle of $S(V)$.  In
  formulas, it sends $(x,t) \in S(V) \times \R \subset
  S(V)^{\epsilon^1}$ to $(x,tx) \in S(V) \times V \subset S(V)_+
  \wedge S^V$.

  If we compose these two maps and smash with $S^{V^\perp}$, we get
  \begin{equation*}
    S^N = S^V \wedge S^{V^\perp} \xrightarrow{p(V) \wedge \mathrm{id}}
    S(V)^{\epsilon^1} \wedge S^{V^\perp} \xrightarrow{z(V) \wedge
      \mathrm{id}} S(V)_+ \wedge S^V \wedge S^{V^\perp} = S(V)_+ \wedge S^N.
  \end{equation*}
  Finally, we write $s(V): S(V)_+ \wedge S^N \to S^N$ for the map
  induced by collapsing $S(V)$ to a point.  Then the composition
  \begin{equation*}
    b(V) = s(V) \circ (z(V) \wedge \mathrm{id}) \circ (p(V) \wedge
    \mathrm{id}): S^N \lra S^N
  \end{equation*}
  is a continuous map of degree $\chi(S^{2n}) = 2$ (by the Poincar{\'e}--Hopf theorem, see \cite[Theorem 2.4]{BG}), depending
  continuously on the point $V \in \Gr_{2n+1}(\R^N)$.  The resulting continuous
  map $b: \Gr_{2n+1}(\R^N) \to \Omega^N S^N$ in the limit gives a map
  $B\GL_{2n+1}(\bR) \to QS^0$ which is a model for the Becker--Gottlieb
  transfer of the sphere bundle over $B\GL_{2n+1}(\bR) \simeq B\mathrm{O}(2n+1)$.

  Now consider the diagram
  \begin{equation*}
    \xymatrix{
      S^N \ar[r]^-{p(V)} \ar@{=}[d] & {S(V)^{\epsilon^1} \wedge S^{V^\perp}}
      \ar[r]\ar[d]^{sz} & C_{p(V)}\ar[d]\\
      S^N \ar[r]^{st(V)} & S^N
      \ar[r] & C_{st(V)},
    }
  \end{equation*}
  where the entries in the right column are the mapping cylinders.
  Since $p(V)$ induces a homeomorphism $S^N/S^{V^\perp} \to
  S(V)^\epsilon \wedge S^{V^\perp}$, it follows from the Puppe sequence
  that there is a canonical induced homeomorphism $C_{p(V)} \cong S^1 \wedge
  S^{V^\perp}$.  Since $st(V)$ has degree 2, there is a homotopy
  equivalence from $C_{st(V)}$ to a mod 2 Moore space, but this is not
  quite sufficiently canonical for our purposes (since we get a
  different mod 2 Moore space for each $V$).  We have proved that for
  each $V \in \Gr_{2n+1}(\R^N)$ there is a canonical commutative diagram
  \begin{equation}\label{eq:7}
    \begin{aligned}
      \xymatrix{
        {S(V)^{\epsilon^1} \wedge S^{V^\perp}}
        \ar[r]\ar[d]^{sz} & S^1 \wedge S^{V^\perp}\ar[d]\\
        S^N
        \ar[r] & C_{st(V)},
      }  
    \end{aligned}
  \end{equation}
  which is a pushout and homotopy pushout.
  
  There is a canonical homotopy from the composition of $st(V): S^N
  \to S^N$ and $S^N \to C_{st(V)}$ to the constant map.  Suspending
  once, $S^1 \wedge S^N \to S^1 \wedge S^N \to S^1 \wedge C_{st(V)}$
  is canonically null homotopic.  If $k \geq 0$ is an integer, we may
  use the $S^1$ coordinate to form the sum of the identity map $1: S^1 \wedge S^N
  \to S^1 \wedge S^N$ and $k$ copies of the map $st(V): S^1 \wedge S^N
  \to S^1 \wedge S^N$.  We obtain a diagram
  \begin{equation}\label{eq:8}
    \begin{aligned}
      \xymatrix{
        {S^1 \wedge S^N}\ar[d]_{1 + kst(V)} \ar[r] & {S^1 \wedge
          C_{st(V)}} \ar@{=}[d]\\
        {S^1 \wedge S^N} \ar[r] & {S^1 \wedge
          C_{st(V)},}
      }
    \end{aligned}
  \end{equation}
  which commutes up to a canonical homotopy.  (The canonical nullhomotopy of each $st$ gives a homotopy from $1 + kst$ to the sum of
  the identity map and $k$ copies of the constant map; this is in turn
  canonically homotopic to the identity map.)  The homotopy class of
  the map $1 + kst(V): S^N \to S^N$ is determined by its degree which
  is $2k+1$, but the actual map depends in a non-trivial way on $V \in
  \Gr_{2n+1}(\R^N)$.

  All spaces in the diagram ``vary continuously in $V$'', in the sense
  that they are fibres over $V$ of fibre bundles over
  $\Gr_{2n+1}(\R^N)$.  The commutative diagram~\eqref{eq:7} in the
  category of spaces over $\Gr_{2n+1}(\R^N)$ may be pulled back along
  $\overline{\theta}_N: \overline{B}_N \to \Gr_{2n+1}(\R^N)$ to form a diagram
  \begin{equation}\label{eq:9}
    \begin{aligned}
      \xymatrix{
        {S(\overline{\theta}_N^*\gamma)^{\epsilon^1 \oplus \overline{\theta}_N^* \gamma^\perp}}
        \ar[r]\ar[d]^{sz} & S^1 \wedge \overline{B}_N^{\overline{\theta}_N^* \gamma^\perp}\ar[d]\\
        S^N\wedge (\overline{B}_N)_+
        \ar[r] & C_{st}^{\overline{B}_N},
      }  
    \end{aligned}
  \end{equation}
  which is again a pushout and homotopy pushout, where $C_{st}^{\overline{B}_N}$ is the mapping cylinder of the map $S^N \wedge (\overline{B}_N)_+ \to S^N \wedge (\overline{B}_N)_+$ given on $(v,x) \in S^N \times \overline{B}_N$ by $st(v,x) = (st(f(x))v,x)$.

  Similarly, the diagrams~\eqref{eq:8} assemble over $V$ to a
  diagram
  \begin{equation}\label{eq:10}
    \begin{aligned}
      \xymatrix{
        {S^1 \wedge S^N\wedge (\overline{B}_N)_+}\ar[d]_{1 + kst} \ar[r] & {S^1 \wedge
          C_{st}^{\overline{B}_N}} \ar@{=}[d]\\
        {S^1 \wedge S^N\wedge (\overline{B}_N)_+} \ar[r] & {S^1 \wedge
          C_{st}^{\overline{B}_N},}
      }
    \end{aligned}
  \end{equation}
  which commutes up to a canonical homotopy.

  Applying $\Omega^{N+1} S^1 \wedge (-)$ to the diagram~\eqref{eq:9} and
  letting $N \to \infty$ we get a model for~\eqref{eq:14}.  The monoid
  $\hAut(\overline{\Theta})$ acts on the whole diagram~\eqref{eq:9}, since it acts on $\overline{B}_N$ over $\Gr_{2n+1}(\R^N)$.  This gives a weak equivalence from $\Omega^\infty
  MT\Theta$ to the homotopy pullback in~\eqref{eq:14}, which is also
  an $\hAut(\overline{\Theta})$ equivariant map.  The monoid
  $\hAut(\overline{\Theta})$ also acts on the diagram~\eqref{eq:10},
  including the homotopy, and after applying $\Omega^{N+1}$ and taking
  $N \to \infty$ we obtain a self-map of $Q(\overline{B}_+)$ which is
  over $\Omega^{\infty} C_{st}$ up to a specified homotopy.  Again
  this self-map and the specified homotopy commutes strictly with the
  action of $\hAut(\overline{\Theta})$ since both the map and the
  homotopy arose from fibrewise constructions over $\Gr_{2n+1}(\R^N)$.

  Finally, the self-map of $Q(\overline{B}_+)$ induces an
  $\hAut(\overline{\Theta})$-equivariant self-map of the homotopy
  pullback of $Q(\overline{B}_+) \rightarrow \Omega^\infty C_{st}
  \leftarrow \Omega^{\infty -1} MT\overline{\Theta}$, and we have seen
  that this pullback is weakly equivalent to $\Omega^\infty MT\Theta$
  by an $\hAut(\overline{\Theta})$-equivariant map.
\end{proof}

\begin{proof}[Proof of Theorem~\ref{thm:scale-euler-char2}]
We continue with the notation developed above. The spectrum homology of
  $C_{st}$ is all $2$-torsion, so the localisation $C_{st}[\frac12]$ as a spectrum is contractible. However, the localised space $(\Omega^\infty C_{st})[\frac12]$ is not
  contractible since it has two components.  Instead, there is a
  spectrum map $w_{2n}: C_{st} \to H\bF_2$ which becomes an
  isomorphism in homology of infinite loop spaces with coefficients in
  any $\bZ[\frac12]$-module.  Similarly, the map
  \begin{equation*}
    \Omega^{\infty} MT\Theta \lra Q(\overline{B}_+)
    \times_{\Omega^\infty H\bF_2} \Omega^{\infty -1} MT\overline{\Theta}
  \end{equation*}
  induces an isomorphism in homology with coefficients in any
  $\bZ[\frac12]$-module, and hence a weak equivalence of localized
  spaces.  The spectrum map $2: S^0 \to S^0$ induces a self-map of
  $Q(\overline{B}_+)$ commuting with the action of $\hAut(\overline{\Theta})$ and whose restriction to the even-degree path components commutes with the map to $\Omega^\infty H\bF_2$.  This self-map can be used in place of $1 + kst$ to produce $\psi^2$.
\end{proof}

\section{An example}\label{sec:example}

In this section we will give an example to show that in Theorem \ref{mainthm:stability} it is indeed necessary to take homology with certain primes inverted. We will take as an example the 6-manifolds $V_d$ given by a smooth degree $d$ hypersurface in $\mathbb{CP}^4$, which we have studied in detail in \cite[Section 5.3]{GRWguide}. Any unattributed claims about these manifolds may be found there. We will also consider their stabilisations
$$V_{d,g} := V_d \# g(S^3 \times S^3)$$
obtained by connect-sum of $V_d$ with $g$ copies of $S^3 \times S^3$, which contain
$$g(V_{d,g}) = g + \tfrac{1}{2}(d^4 - 5d^3 + 10 d^2 - 10 d + 4)$$
copies of $S^3 \times S^3$.

\begin{theorem}\label{thm:ex}
Let $p \geq 7$ be a prime number, and suppose that $g(V_{d,g}) \geq 9$. Then
$$H^3(\cM^{\mathrm{or}}(V_{d,g});\bZ_{(p)}) \cong \bZ_{(p)}/\gcd(d, g).$$
\end{theorem}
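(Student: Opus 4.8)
The plan is to compute the cohomology of $\cM^{\mathrm{or}}(V_{d,g})$ in the stable range via the infinite loop space machinery, localised at $p$; the point of $p\geq 7$ is that it makes everything collapse except the information coming from the Euler characteristic and the hyperplane class. First I would reduce to the infinite loop space. Any closed simply-connected $6$-manifold stably diffeomorphic to $V_d$ has $b_2=1$ (a stable invariant), hence $\chi=4-b_3\le 4$, with equality exactly for genus-zero representatives $W_0$; since $\chi(V_{d,g})=4-2g(V_{d,g})$ we get $(-1)^n\chi(V_{d,g})=2g(V_{d,g})-4$ and $\min\{(-1)^n\chi(W_0)\}=-4$, so the hypothesis $g(V_{d,g})\ge 9$ is precisely condition~(ii) of Theorem~\ref{mainthm:stability} for $i=3$. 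Hence, as in the proof of that theorem, the map \eqref{eq:CompMap} induces an isomorphism
$$H^3(\cM^{\mathrm{or}}(V_{d,g});\bZ_{(p)})\;\cong\;H^3\bigl((\Omega^\infty_{[[V_{d,g}]]}MT\Theta)\moddd\hAut(\overline{u});\bZ_{(p)}\bigr),$$
with $\Theta$ the Moore--Postnikov $3$-stage of the oriented tangential structure of $V_d$, and it remains to compute the right-hand side.

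Next I would pin down the relevant data $p$-locally. As $V_d$ is simply connected with $\pi_2\cong\bZ$ generated by the hyperplane class $c$, the space $\overline{B}$ is a $\CP^\infty$-bundle over $B\SO(7)$ (twisted only $2$-primarily when $d$ is even), so for $p\ge 7$ one may take $\overline{B}\simeq\CP^\infty\times B\SO(7)$ with $\theta$ pulled back from $B\SO(7)$, whence $MT\Theta\simeq\Sigma^\infty\CP^\infty_+\wedge MT\SO(6)$. Because $p\ge 7$ the $p$-local stable stems vanish through degree $2p-3\ge 11$, so through degree $3$ the homotopy and homology of $MT\Theta$ are carried by the torsion-free, even groups $H_{*+6}(\CP^\infty\times B\SO(6);\bZ_{(p)})$: in particular $\pi_1 MT\Theta$ and $\pi_3 MT\Theta$ are $p$-locally zero, $\pi_2 MT\Theta\otimes\bZ_{(p)}\cong\bZ_{(p)}^5$ with basis the Thom-duals of $c^4,c^2p_1,ce,p_1^2,p_2$, and $\pi_0 MT\Theta\otimes\bZ_{(p)}\cong\bZ_{(p)}^3$ with basis the Thom-duals of $c^3,cp_1,e$. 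Finally $\hAut(\overline{u})$ is, $p$-locally and in the relevant range of degrees, equivalent to $\{\pm1\}\ltimes K(\bZ,2)$, with $K(\bZ,2)=\hAut(\overline{u})_{\mathrm{id}}$ acting through the $\CP^\infty$-factor of $\overline{B}$ and $-1$ acting by complex conjugation $c\mapsto-c$.

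The crucial observation is that conjugation moves the path-component, so the homotopy orbit is really a Borel construction over $K(\bZ,3)$. Indeed, under the identification above the Hurewicz image of $[V_{d,g}]$ in $\pi_0 MT\Theta\otimes\bZ_{(p)}$ has coordinates, up to $p$-local units, equal to the characteristic numbers $\langle c^3,[V_{d,g}]\rangle=d$, $\langle c\,p_1(TV_d),[V_{d,g}]\rangle=(5-d^2)d$ and $\langle e,[V_{d,g}]\rangle=\chi(V_{d,g})$ — the $S^3\times S^3$ summands contributing nothing to the first two — and since conjugation acts on the Thom-dual of $c^k$ by $(-1)^k$ it fixes $[V_{d,g}]$ only when $d=0$. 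Thus $\pi_0\hAut(\overline{u})=\bZ/2$ acts freely on the orbit $[[V_{d,g}]]$, which consists of two path-components swapped by conjugation while $\hAut(\overline{u})_{\mathrm{id}}\simeq K(\bZ,2)$ fixes each; therefore $(\Omega^\infty_{[[V_{d,g}]]}MT\Theta)\moddd\hAut(\overline{u})\simeq(\Omega^\infty_{[V_{d,g}]}MT\Theta)\moddd K(\bZ,2)$, which fibres over $BK(\bZ,2)=K(\bZ,3)$ with fibre $F=\Omega^\infty_{[V_{d,g}]}MT\Theta$. In the Serre spectral sequence with $\bZ_{(p)}$-coefficients, $H_*(F)$ is $(\bZ_{(p)},0,\bZ_{(p)}^5,0)$ in degrees $0,\dots,3$, $H_*(K(\bZ,3);\bZ_{(p)})$ is $(\bZ_{(p)},0,0,\bZ_{(p)},0)$ in degrees $0,\dots,4$ (the first $p$-torsion occurring in degree $\ge 2p+1$), and the system of coefficients is trivial, so the only possibly non-zero differential below total degree $4$ is the transgression $d^3\colon H_3(K(\bZ,3);\bZ_{(p)})=\bZ_{(p)}\to H_2(F;\bZ_{(p)})=\bZ_{(p)}^5$. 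This $d^3$ is the reduced action of $\hAut(\overline{u})_{\mathrm{id}}$ on $[V_{d,g}]$; unwinding the action on $\Sigma^\infty\CP^\infty_+$ — which on homology sends $\beta_k\in H_{2k}(\CP^\infty)$ to $(k+1)\beta_{k+1}$ — shows $d^3(1)$ has coordinates $(4d,\,2(5-d^2)d,\,\chi(V_{d,g}),\,0,\,0)$. Since $2$, $4$ are units and $\chi(V_{d,g})=\chi(V_d)-2g$ with $v_p(\chi(V_d))=v_p(d)$ (because $\chi(V_d)=-d(d^3-5d^2+10d-10)$ and $10$ is a $p$-local unit), the $p$-adic valuation of the content of $d^3(1)$ equals $\min(v_p(d),v_p(g))=v_p(\gcd(d,g))$. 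Hence $d^3$ is injective with cokernel $\cong\bZ_{(p)}^4\oplus\bZ_{(p)}/\gcd(d,g)$, so the total space has $H_3(-;\bZ_{(p)})=0$ and $H_2(-;\bZ_{(p)})\cong\bZ_{(p)}^4\oplus\bZ_{(p)}/\gcd(d,g)$, and by universal coefficients $H^3(-;\bZ_{(p)})\cong\bZ_{(p)}/\gcd(d,g)$; with the first paragraph this proves the theorem.

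The main obstacle is this last step: one must use an honestly $\hAut(\overline{u})$-equivariant point-set model for the action on $MT\Theta$ (in the spirit of the proof of Theorem~\ref{thm:scale-euler-char}), identify $\hAut(\overline{u})$ together with its action on the $\CP^\infty$-coordinate, and control the normalising units in the Hurewicz images so that the identity ``content $=\gcd(d,g)$'' holds on the nose rather than up to a unit. Everything else — the collapse of the spectral sequence and the vanishing of stable stems, of Dyer--Lashof operations, and of $2$- and $5$-primary contributions in the relevant range — is forced by $p\ge 7$.
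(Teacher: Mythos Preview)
Your proposal is correct and follows essentially the same strategy as the paper: reduce to the infinite loop space in the stable range, identify the relevant homotopy and homology $p$-locally, and compute $H^3$ via the Serre spectral sequence of a fibration over $K(\bZ,3)$, where the decisive $d_3$-differential is governed by the characteristic numbers $4d$, $2d(5-d^2)$, and $\chi(V_{d,g})$, yielding the ideal $(d,g)\subset\bZ_{(p)}$.

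The only notable difference is one of packaging. The paper does not analyse $\hAut(\overline{u})$ directly; instead it passes through an intermediate tangential structure $\mu$ corresponding to $B\mathrm{SO}(6)\times K(\bZ,2)$, shows that $\cM^{\theta_d}(V_{d,g},\ell)\to\cM^{\mu}(V_{d,g},u\circ\ell)$ is a $\bZ_{(p)}$-homology equivalence (because $u:B_d\to B\mathrm{SO}(6)\times K(\bZ,2)$ is one and the stabiliser $G\leq\hAut(u)$ is $p$-locally trivial), and then runs the Serre spectral sequence for the geometric fibration $\cM^{\mu}(V_{d,g})\to\cM^{\mathrm{or}}(V_{d,g})\to K(\bZ,3)$, with the $d_3$-formula imported from \cite[\S5.3]{GRWguide}. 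You instead stay on the Borel-construction side, explicitly identify $\hAut(\overline{u})\simeq\{\pm1\}\ltimes K(\bZ,2)$ $p$-locally, observe that conjugation acts freely on the two-element orbit $[[V_{d,g}]]$, and derive the $d_3$-differential from the $K(\bZ,2)$-action on $\Sigma^\infty\CP^\infty_+$. Both routes produce the same fibration over $K(\bZ,3)$ and the same differential; your version makes the origin of the coefficients $4,2,1,0,0$ in $d_3$ more transparent, while the paper's route avoids having to pin down $\hAut(\overline{u})$ and its action precisely.
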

The formula
$\chi = \chi(V_{d,g}) = d(10-10d+ 5d^2-d^3) -2g$
implies that $\gcd(d,g) = \gcd(d,\chi)$, so the theorem may also be written
\begin{equation*}
  H^3(\cM^{\mathrm{or}}(V_{d,g});\bZ_{(p)}) \cong \bZ/p^{\min(v_p(d),v_p(\chi))}\bZ.
\end{equation*}
Hence the moduli spaces for the oriented stably diffeomorphic manifolds $V_{d,g}$ and $V_{d,g'}$ have isomorphic $H^3(-;\bZ_{(p)})$ if and only if $v_p(\chi(V_{d,g})) = v_p(\chi(V_{d,g'}))$, provided those $p$-adic valuations are at most $v_p(d)$.

\begin{proof}[Proof of Theorem \ref{thm:ex}]
In \cite[Section 5.3]{GRWguide} we computed the $\bQ$-cohomology of $\cM^{\mathrm{or}}(V_{d,g})$ in a stable range. We will refer to details of the notation from that discussion, which differs slightly from the notation used earlier in this note.

Firstly, the $\bQ$-cohomology calculation goes through without significant changes for $\cM^{\mathrm{or}}(V_{d,g})$, because $V_{d,g}$ and $V_{d}$ have the same Moore--Postnikov $3$-stage, and because any orientation preserving diffeomorphism of $V_{d,g}$ must also act trivially on $H^2(V_{d,g};\bZ)$. The only difference is that the formula for the $d_3$-differential now involves characteristic numbers of $V_{d,g}$, which can be calculated to give
\begin{align*}
d_3(\kappa_{p_2}) &= 0\\
d_3(\kappa_{p_1^2}) &= 0\\
d_3(\kappa_{te}) &=\kappa_e=\chi(V_{d,g}) = d(10-10d+ 5d^2-d^3)-2g\\
d_3(\kappa_{t^2 p_1}) &= 2\kappa_{tp_1}= 2d(5-d^2)\\
d_3(\kappa_{t^4}) &= 4\kappa_{t_3} = 4d.
\end{align*}

Secondly, the $\bQ$-cohomology calculation yields an analogous $\bZ_{(p)}$-cohomology calculation for large enough primes $p$. Specifically the spectrum $MT\theta_d$ is $(-6)$-connected, so by the Atiyah--Hirzebruch spectral sequence the Hurewicz map 
$$\pi_i(MT\theta_d)_{(p)} \lra H_i(MT\theta_d; \bZ_{(p)}) \cong H_{i+6}(B_d ; \bZ_{(p)})$$
is an isomorphism as long as $i < 2p-3 -6 $, so as long as $i \leq 5$ since we have assumed that $p \geq 7$. As $p$ is odd we have
$$H^{*}(B_d ; \bZ_{(p)}) = H^{*}(B\mathrm{SO}(6) \times K(\bZ,2) ; \bZ_{(p)}) = \bZ_{(p)}[p_1, p_2, e, t].$$
Thus we have $\pi_1(\Omega^\infty_0 MT\theta_d)_{(p)}=0$, $\pi_2(\Omega^\infty_0 MT\theta_d)_{(p)} \cong \bZ_{(p)}^{5}$ with the isomorphism given by the tautological classes $\kappa_{p_2}, \kappa_{p_1^2}, \kappa_{te}, \kappa_{t^2 p_1}, \kappa_{t^4}$, and $\pi_3(\Omega^\infty_0 MT\theta_d)_{(p)}=0$. Therefore
$$H^i(\cM^{\theta_d}(V_{d,g} , \ell_{V_{d,g}}) ; \bZ_{(p)}) = \begin{cases}
\bZ_{(p)} & i=0\\
0 & i=1\\
\bZ_{(p)}\{\kappa_{p_2}, \kappa_{p_1^2}, \kappa_{te}, \kappa_{t^2 p_1}, \kappa_{t^4}\} & i=2\\
0 & i=3.
\end{cases}$$

The submonoid $G \leq \mathrm{hAut}(u)$ of those path components which stabilise $[V_{d,g}, \ell_{V_{d,g}}]$ is path connected, and as the map $u: B_d \to B\mathrm{SO}(6) \times K(\bZ,2)$ is a $\bZ_{(p)}$-homology equivalence, since $p$ is odd, we also have that $\pi_i(G) \otimes \bZ_{(p)}=0$ for $i>0$. Thus the map $\cM^{\theta_d}(V_{d,g}, \ell_{V_{d,g}}) \to \cM^{\mu}(V_{d,g}, u \circ \ell_{V_{d,g}})$ is a $\bZ_{(p)}$-homology equivalence.

It remains to study the Serre spectral sequence for the fibration sequence
$$\cM^{\mu}(V_{d,g}, u \circ \ell_{V_{d,g}}) \lra \cM^{\mathrm{or}}(V_{d,g}) \lra K(\bZ,3),$$
which in low degrees has a single differential
$$d_3 : E_3^{0,2} = \bZ_{(p)}\{\kappa_{p_2}, \kappa_{p_1^2}, \kappa_{te}, \kappa_{t^2 p_1}, \kappa_{t^4}\} \lra E_3^{3,0} = H^3(K(\bZ,3);\bZ_{(p)})=\bZ_{(p)}$$
given by the formula above, so $H^3(\cM^{\mathrm{or}}(V_{d,g});\bZ_{(p)})$ is given by the cokernel of this differential. The claim now follows by the identity of ideals
$$(4d, 2d(5-d^2),d(10-10d+ 5d^2-d^3)-2g) = (d,g)$$
of $\bZ_{(p)}$, using again that $p$ is odd.
\end{proof}

\bibliographystyle{amsalpha}
\bibliography{biblio}

\end{document}